\numberwithin{equation}{section}
\newtheorem{theorem}{Theorem}[section]
\newtheorem{lemma}[theorem]{Lemma}
\newcommand{\RR}{\mathbb{R}}
\newcommand{\ZZ}{{\mathbb Z}}
\newcommand{\TT}{{\mathbb T}}
\def\beq{\begin{equation}}
\def\eeq{\end{equation}}
\def\bb1{{1\!\!1}}
\def\rit{{\Bbb R}}
\begin{document}

\title{Generator functions and their applications}

\author{Emmanuel Grenier\footnotemark[1]
  \and Toan T. Nguyen\footnotemark[2]
}

\maketitle

\renewcommand{\thefootnote}{\fnsymbol{footnote}}

\footnotetext[1]{Equipe Projet Inria NUMED,
 INRIA Rh\^one Alpes, Unit\'e de Math\'ematiques Pures et Appliqu\'ees., 
 UMR 5669, CNRS et \'Ecole Normale Sup\'erieure de Lyon,
               46, all\'ee d'Italie, 69364 Lyon Cedex 07, France. Email: Emmanuel.Grenier@ens-lyon.fr}

\footnotetext[2]{Department of Mathematics, Penn State University, State College, PA 16803. Email: nguyen@math.psu.edu. TN's research was partly supported by the NSF under grant DMS-1764119, an AMS Centennial fellowship, and a Simons fellowship.}

% \tableofcontents

\subsubsection*{Abstract}

In \cite{GrN6}, we introduced so called {\em generators} functions to precisely follow the regularity of analytic solutions
of Navier Stokes equations. 
In this short note, we give a presentation of these generator functions and use them to give existence results of
analytic solutions to some classical equations, namely to hyperbolic  equations, to incompressible Euler equations, 
 and to hydrostatic Euler and Vlasov models.
The use of these generator functions appear to be an alternative way to the use of the classical
abstract Cauchy-Kovalevskaya theorem \cite{Asano,Caflisch,Nirenberg,Safonov}.

%%%%%%%%%%%%%%%%%%%%%%%%%

\section{Introduction}

%%%%%%%%%%%%%%%%%%%%%%%%%

The general abstract Cauchy-Kovalevskaya theorem has been intensively used to construct analytic solutions 
to various evolution partial differential equations, including first order hyperbolic and parabolic equations,
 or Euler and Navier-Stokes equations. We refer for instance to Asano \cite{Asano}, Caflisch \cite{Caflisch}, 
 Nirenberg \cite{Nirenberg}, Safonov \cite{Safonov}, among others.  
 In this short note, we use generators functions as introduced in \cite{GrN6} to obtain existence results for these
 equations. The results are not new, but show the versatility and simplicity of use of these generator functions.
 We believe that such approaches could be used on many other equations and provide easy ways to
 get existence results or to investigate instabilities.
 
Let us first introduce generator functions in the particular case of a periodic function $f(t,x)$ 
on $t \ge 0$ and $x\in \TT^d$, $d\ge 1$. For $z \in \rit$, we introduce the {\em generator function} $Gen[f]$ defined by
\beq \label{Genbl}
\begin{aligned}
Gen[f](t,z) &= \sum_{\alpha \in \ZZ^d} e^{z |\alpha|} |f_\alpha(t) | 
\end{aligned}\eeq
in which $f_\alpha$ denotes the Fourier transform of $f(t,x)$ with respect to $x\in \TT^d$. 
If $f$ is analytic in $x$, $Gen[f]$ is only defined for small enough $|z|$, up to the analyticity radius of $f(t,.)$. 
The results in this note also apply to the case when $x\in \RR^d$, 
with which the above summation is replaced by the integral over $\RR^d$. 
In applications, we may also introduce generator functions depending on multi-variables $z = (z_1,\cdots ,z_d)$ 
that correspond to the analyticity radius of $f(x)$ in $(x_1,\cdots, x_d)$, respectively; 
see, for instance, \cite{GrN6} for the case of boundary layers on the half space $\TT \times \RR_+$.

First note that generator functions are non negative, and that all their derivatives are non negative and non decreasing in $z$.
Moreover generator functions have very nice properties with respect to algebraic operations and differentiation.
Namely they "commute" with the product, the sum and the differentiation, making their use very versatile. 

\begin{lemma}\label{lem-generators} For any $f,g$, there hold the following properties
\begin{equation}\label{Gen-property}
Gen [f + g] \le  Gen[f] + Gen[g]
\end{equation}
\beq \label{Gen-property2}
Gen [fg] \le  Gen[f] Gen[g] 
\eeq
\beq \label{Gen-property3}
Gen[\nabla_xf] = \partial_zGen[f] ~~\qquad 
\end{equation}
\beq \label{Gen-property4}
\partial_t Gen[f] \le Gen[\partial_t f] ~~\qquad 
\end{equation}
for all $z\ge 0$. 
\end{lemma}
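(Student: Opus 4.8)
The plan is to reduce each of the four properties to an elementary statement about the Fourier coefficients $f_\alpha$, since $Gen[\cdot]$ is built directly from them through the defining sum \eqref{Genbl}. The additivity bound \eqref{Gen-property} is immediate: writing $(f+g)_\alpha = f_\alpha + g_\alpha$ and applying the scalar triangle inequality $|f_\alpha + g_\alpha| \le |f_\alpha| + |g_\alpha|$ inside the sum separates it termwise into $Gen[f] + Gen[g]$.

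For the product bound \eqref{Gen-property2}, I would start from the convolution formula $(fg)_\alpha = \sum_\beta f_\beta\, g_{\alpha - \beta}$, so that $|(fg)_\alpha| \le \sum_\beta |f_\beta|\,|g_{\alpha-\beta}|$. The crucial step is the subadditivity of the exponential weight: for $z \ge 0$ the triangle inequality $|\alpha| \le |\beta| + |\alpha - \beta|$ gives $e^{z|\alpha|} \le e^{z|\beta|}\,e^{z|\alpha-\beta|}$. Substituting this into the resulting double sum and changing variables to $\gamma = \alpha - \beta$ lets the sum factor as a product, yielding $Gen[fg] \le Gen[f]\,Gen[g]$. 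This factorization, together with the sign condition $z \ge 0$ that it requires, is the one genuinely non-formal point of the lemma; everything else is bookkeeping once the weight is split.

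Property \eqref{Gen-property3} comes from the Fourier symbol of the gradient: the coefficient of $\nabla_x f$ is $i\alpha f_\alpha$, whose modulus in the Euclidean norm used in the weight is exactly $|\alpha|\,|f_\alpha|$. On the other side, differentiating the defining sum in $z$ gives $\partial_z Gen[f] = \sum_\alpha |\alpha|\, e^{z|\alpha|}\,|f_\alpha|$, so the two expressions coincide term by term and one obtains genuine equality rather than an inequality.

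Finally, for \eqref{Gen-property4} I would use that $t$-differentiation commutes with the spatial Fourier transform, so $(\partial_t f)_\alpha = \partial_t f_\alpha$. Differentiating the sum term by term and invoking the pointwise bound $\partial_t |f_\alpha(t)| \le |\partial_t f_\alpha(t)|$ — valid for complex-valued $f_\alpha$, since $\tfrac{d}{dt}|f_\alpha| = \mathrm{Re}(\bar f_\alpha\, \partial_t f_\alpha)/|f_\alpha|$ away from the zeros of $f_\alpha$ — produces the claimed inequality. The reason this is only an inequality, in contrast to \eqref{Gen-property3}, is precisely this loss in passing from $\partial_t |f_\alpha|$ to $|\partial_t f_\alpha|$. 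The main technical care needed is the justification of differentiating under the summation sign, which I expect to handle by dominated convergence, using that both $Gen[f]$ and $Gen[\partial_t f]$ are finite on the relevant range of $z$ up to the analyticity radius.
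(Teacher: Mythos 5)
Your proposal is correct and follows exactly the paper's argument: the convolution formula together with $e^{z|\alpha|}\le e^{z|\beta|}e^{z|\alpha-\beta|}$ for the product bound, direct termwise computation for \eqref{Gen-property} and \eqref{Gen-property3}, and the pointwise inequality $\partial_t|f_\alpha|\le|\partial_t f_\alpha|$ for \eqref{Gen-property4}. You merely supply more detail than the paper (the explicit symbol computation for the gradient, the justification of $\partial_t|f_\alpha|\le|\partial_t f_\alpha|$, and the remark on differentiating under the sum), all of which is consistent with the paper's terser proof.
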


\begin{proof}  Let $f_\alpha$ and $g_\alpha$ be the Fourier transform of $f$ and $g$, respectively.
It follows that 
$$ 
(fg)_\alpha = f_\alpha \star_\alpha g_\alpha
$$ 
for $\alpha \in \ZZ^d$. 
For $z\ge 0$, we compute 
$$\begin{aligned}
Gen[fg](z) 
&= \sum_{\alpha \in \ZZ^d} e^{z |\alpha|} |f_\alpha \star_\alpha g_\alpha | 
\le \sum_{\alpha \in \ZZ^d} \sum_{\beta \in \ZZ^d} e^{z |\beta|} e^{z|\alpha-\beta|} |f_{\beta}|  |g_{\alpha-\beta} | 
\\&\le Gen[f] Gen[g]
\end{aligned}$$
which is the second inequality. The first and third identities follow directly from the definition. 
The last inequality is a direct consequence of
$$
\partial_t | f_\alpha | \le | \partial_t f_\alpha |.
$$
\end{proof}

The use of generator functions are not limited to polynomial operations. Namely, we have 
\begin{lemma}\label{lem-analyticF}
 Let $F$ be an analytic function
$$
F(z) = \sum_{n \ge 0} a_n z^n
$$
with some convergence radius $\rho$. Let us introduce
$$
\widetilde F(z) = \sum_{n \ge 0} | a_n | z^n.
$$
Then, for any function $f$, provided $| f | < \rho$, there holds 
\beq
Gen[F(f)] \le \widetilde F (Gen[f]) .
\eeq
\end{lemma}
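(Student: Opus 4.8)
The plan is to reduce everything to the three algebraic rules already proved in Lemma~\ref{lem-generators}, extending them from finite to infinite sums by a truncation-and-limit argument. Writing $F(f) = \sum_{n\ge 0} a_n f^n$, the hypothesis $|f| < \rho$ guarantees that this series converges (indeed uniformly in $x$, by the Weierstrass $M$-test, since $\sum_n |a_n|\, |f|^n < \infty$ when $|f|<\rho$), so $F(f)$ is a well-defined function and $Gen[F(f)]$ makes sense. I would first work with the truncations $F_N(f) = \sum_{n=0}^N a_n f^n$, for which only the finite versions of \eqref{Gen-property} and \eqref{Gen-property2} are needed.

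For each fixed $N$, apply subadditivity \eqref{Gen-property} across the $N+1$ terms, and then submultiplicativity \eqref{Gen-property2} iterated $n$ times on each monomial $f^n$. Since $a_n$ is a scalar and $Gen$ is built from moduli of Fourier coefficients it is absolutely homogeneous, $Gen[a_n f^n] = |a_n|\, Gen[f^n]$, whence $Gen[a_n f^n] \le |a_n|\, Gen[f]^n$. Summing over $0\le n\le N$ gives
\beq
Gen[F_N(f)] \le \sum_{n=0}^N |a_n|\, Gen[f]^n \le \widetilde F\big(Gen[f]\big),
\eeq
the last bound being the full series $\widetilde F$ evaluated at $Gen[f]$ (possibly $+\infty$ when $Gen[f]\ge\rho$).

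It then remains to pass to the limit $N\to\infty$ on the left-hand side. Because $|f| < \rho$ forces $F_N(f)\to F(f)$ uniformly in $x$, the Fourier coefficients converge termwise, $(F_N(f))_\alpha \to (F(f))_\alpha$ for every $\alpha$. Viewing the $\alpha$-summation defining $Gen$ as integration against counting measure, Fatou's lemma yields the lower semicontinuity
\beq
Gen[F(f)] \le \liminf_{N\to\infty} Gen[F_N(f)],
\eeq
and combining this with the uniform bound above closes the argument.

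The step I expect to require the most care is precisely this last passage to the limit: one cannot claim equality $Gen[F(f)] = \lim_N Gen[F_N(f)]$ a priori, only the Fatou inequality, and it is this lower-semicontinuity bound that must be invoked. In the regime $Gen[f]\ge\rho$ the right-hand side is $+\infty$ and the statement is vacuous, so all the content lies in the range $Gen[f]<\rho$, where every series in sight converges absolutely and the interchange of limits is harmless.
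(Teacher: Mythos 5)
Your proof is correct and follows essentially the same route as the paper's: both rest on the termwise bound $Gen[a_n f^n] \le |a_n|\, Gen[f]^n$ obtained from \eqref{Gen-property2}, followed by summation over $n$. The only difference is that you carefully justify the passage from the finite truncations $F_N(f)$ to the full series via uniform convergence and Fatou's lemma, a limit step the paper performs implicitly by summing directly.
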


\begin{proof}
First, using \eqref{Gen-property2}, we have 
$$
Gen[a_n f^n] \le |a_n| Gen[f]^n .
$$
The lemma follows from multiplying the inequality by $z^n$ and summing over $n$. 
\end{proof}

Moreover generator functions easily combine with truncation. Let $P_N$ be the projection of the first $N$ Fourier modes; namely
\begin{equation}\label{def-PN}
P_N(f)(x) = \sum_{\alpha \in \ZZ^d, |\alpha| \le N} e^{i \alpha x} f_\alpha.\end{equation}
Then it follows that 
\beq \label{est-PN}
Gen[P_N(f)] \le Gen[f] .
\eeq
The main point is that if $f(t,x)$ satisfies a non linear partial differential equation then $Gen[f]$ satisfies
a similar differential inequality. This inequality describes in an acute way how the radius of analyticity shrinks
as time goes on, and allows to get analytic bounds on $f$, and in particular to bound all its derivatives at the same
time. We now illustrate this remark in the particular case of hyperbolic equations.

%%%%%%%%%%%%%%%%%%%%%

\section{Hyperbolic equations}

%%%%%%%%%%%%%%%%%%%%%

We consider the following system of evolution equations
\begin{equation}\label{1st-pde}
\partial_t u = A(u, \nabla_x u)
\end{equation}
for a vector valued function $u = u(x,t)$, with $x\in \TT^d$, $d\ge 1$, and $t\ge 0$. 
We assume that  the function $A(\cdot,\cdot)$ satisfies 
\begin{equation}\label{A-condition}
Gen[A(u,\nabla_x u)] \le C_0 F( Gen[u] ) \Big( 1 + \partial_z Gen[u]\Big)
\end{equation} 
%and 
%\begin{equation}\label{A-condition1}
%\begin{aligned}
%Gen[A(u,\nabla_x u) - A(v,\nabla_x v)] &\le C_0 F_1( Gen[u,v],\partial_z Gen[u,v] ) 
%\\&\quad \Big( Gen[u-v] + \partial_z Gen[u-v]\Big)
%\end{aligned}
%\end{equation} 
for some constant $C_0$ and some analytic functions $F$.
For instance, $A$ may be a quadratic polynomial in $u$ and $\nabla u$, in which case $F = Id$,
or more generally, $A$ may be of the form $G(u) \cdot \nabla_x u$, for which $F = \widetilde G$ (see Lemma \ref{lem-analyticF}).
 
We shall construct solutions in the function space $X_\rho$ defined by 
\begin{equation}\label{def-Xrho} 
X_\rho : = \Big \{ u~:~ Gen[u](\rho) <\infty \Big\}.
\end{equation}
\begin{theorem}\label{theo-1stpde} 
Let $\rho>0$ and $u_0$ be in $X_{\rho}$. Then  the Cauchy problem \eqref{1st-pde} 
with initial data $u_0$ has a unique solution $u(t) $ in $X_{\rho(t)}$ for positive times $t$ as long as 
$\rho(t) = \rho - C_1 t$ remains positive, $C_1$ being some large positive constant depending on $u_0$. 
\end{theorem}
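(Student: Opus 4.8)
The plan is to combine a Galerkin truncation with an a priori estimate on the generator function that encodes, through a transport structure in the variable $z$, the shrinking of the analyticity radius. Throughout I take $F$ to be non-decreasing on $[0,\infty)$ within its convergence radius, as it is in the examples (there $F=\widetilde G$ has non-negative Taylor coefficients by Lemma~\ref{lem-analyticF}); this monotonicity is what makes the comparison step below work.

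First I would set up the approximate problem. For each $N$, consider the truncated system $\partial_t u^N = P_N A(u^N, \nabla_x u^N)$ with data $u^N(0) = P_N u_0$. Since this is an ordinary differential equation in the finite-dimensional space of trigonometric polynomials of degree at most $N$ and $A$ is analytic, Picard--Lindel\"of yields a unique local smooth solution $u^N$. For such a solution $Gen[u^N](t,z) = \sum_{|\alpha| \le N} e^{z|\alpha|} |u^N_\alpha(t)|$ is a finite sum, smooth in $z$ and Lipschitz in $t$, so all manipulations below are legitimate (at worst one works with one-sided derivatives where the absolute values are not differentiable).

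The heart of the argument is the a priori estimate. Writing $\theta = Gen[u^N]$ and combining \eqref{Gen-property4}, \eqref{est-PN}, the equation, and the structural hypothesis \eqref{A-condition} gives $\partial_t \theta \le Gen[\partial_t u^N] = Gen[P_N A(u^N,\nabla_x u^N)] \le C_0 F(\theta)\,(1 + \partial_z \theta)$ for every $z \ge 0$, where I also use \eqref{Gen-property3} to identify $\partial_z \theta = Gen[\nabla_x u^N]$. I would then evaluate $\theta$ along the characteristic line $z = \rho(t) = \rho - C_1 t$. Setting $g(t) = \theta(t, \rho - C_1 t)$ and using the chain rule,
$$g'(t) \le C_0 F(g(t)) + \big( C_0 F(g(t)) - C_1\big)\, \partial_z\theta(t, \rho - C_1 t).$$
Since $\theta$ is non-decreasing in $z$ one has $\partial_z \theta \ge 0$, so as soon as $C_1 \ge C_0 F(g(t))$ the last term is non-positive and $g$ obeys the closed differential inequality $g'(t) \le C_0 F(g(t))$. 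A bootstrap then fixes the constants: starting from $g(0) \le Gen[u_0](\rho) =: \Theta_0$, pick $M$ with $\Theta_0 < M$ inside the convergence radius of $F$; comparison with the scalar ODE $\bar g' = C_0 F(\bar g)$, $\bar g(0)=\Theta_0$, keeps $\bar g \le M$ on an interval $[0,\tau]$ depending only on $\Theta_0,M,F$. Choosing $C_1 = \max\{C_0 F(M),\, \rho/\tau\}$, a large constant depending on $u_0$ through $\Theta_0$, guarantees $\rho/C_1 \le \tau$, hence $g(t)\le M$ and so $Gen[u^N](t,\rho(t)) \le M$ uniformly in $N$ for all $t$ with $\rho(t)>0$.

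With this uniform analytic bound in hand, I would pass to the limit. The control of $Gen[u^N](t,\rho(t))$ bounds $u^N$ and, on any strictly smaller radius, all its $x$-derivatives, while the equation controls $\partial_t u^N$; an Arzel\`a--Ascoli / diagonal extraction (or, equivalently, showing $\{u^N\}$ is Cauchy in $X_{\rho(t)-\delta}$) produces a limit $u$, and lower semicontinuity of $Gen$ (Fatou) gives $u \in X_{\rho(t)}$ solving \eqref{1st-pde}. Uniqueness follows from the same transport mechanism applied to a difference $w = u-v$ of two solutions: from the structure of $A$ (e.g. $A = G(u)\cdot\nabla_x u$, treating $G(u)-G(v)$ via Lemma~\ref{lem-analyticF}) one gets a bound of the form $Gen[A(u,\nabla_x u) - A(v,\nabla_x v)] \le C\big(Gen[u],Gen[v]\big)\big(Gen[w] + \partial_z Gen[w]\big)$, and repeating the characteristic estimate with $w(0)=0$ forces $Gen[w]\equiv 0$. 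I expect the main obstacle to be this convergence/uniqueness step rather than the a priori bound: establishing the Lipschitz-type difference estimate for $A$ in the generator norm and justifying the limit passage while the radius $\rho(t)$ is itself shrinking in time. The a priori bound, by contrast, is essentially forced by the transport structure $\partial_t\theta \le C_0 F(\theta)(1+\partial_z\theta)$, the only delicate point there being the bootstrap that simultaneously fixes $C_1$ and keeps $g(t)$ inside the convergence radius of $F$.
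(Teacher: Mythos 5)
Your proposal is correct, and its skeleton (Galerkin truncation with $P_N$, the Hopf-type inequality \eqref{Hopf}, a radius shrinking linearly in time, then compactness) is the paper's; but your handling of the key a priori estimate is a genuinely different and more elementary implementation. The paper rescales the whole strip, setting $F_N(t,z)=G_N(t,\theta(t)z)$ with $\theta(t)=1-F(3M_0)\rho^{-1}t$, and runs a maximum-principle argument for the resulting transport inequality \eqref{ode-FN}, arranging outgoing characteristics at $z=0$ and $z=\rho$ so as to bound $\sup_{0\le z\le\rho}F_N$. You instead integrate $Gen[u^N]$ along the single straight characteristic $z=\rho-C_1t$ and use the two monotonicities --- $\partial_z Gen[u^N]\ge 0$ and $F$ non-decreasing --- to discard the transport term once $C_1\ge C_0F(M)$, closing the scalar inequality $g'\le C_0F(g)$; monotonicity of the generator in $z$ then propagates the bound from the line to all $z\le\rho(t)$. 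What your route buys: no maximum principle for the transport inequality is needed, the linear law $\rho(t)=\rho-C_1t$ of the statement drops out directly rather than through the rescaling factor $\theta(t)$, and your bootstrap fixing $C_1=\max\{C_0F(M),\rho/\tau\}$ is more explicit than the paper's continuation argument showing $T_N\ge T>0$. Note that both arguments require $F$ non-decreasing --- the paper uses this tacitly, e.g.\ in $F(F_N)\le F(2M_0)$ and in arranging $F(F_N)+\theta'(t)\rho<0$ --- and your explicit assumption is harmless since one may always replace $F$ by the $\widetilde F$ of Lemma \ref{lem-analyticF}. Finally, on uniqueness you are more candid than the paper: assumption \eqref{A-condition} bounds $A$ itself but not differences $A(u,\nabla_x u)-A(v,\nabla_x v)$, so the Lipschitz-type estimate in the generator norm that your uniqueness argument needs (available for the structured examples such as $A=G(u)\cdot\nabla_x u$) is genuinely an additional input; the paper's proof delegates both uniqueness and the limit passage to ``classical arguments,'' so the gap you flag is shared with, not worse than, the original.
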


\begin{proof} We shall construct solutions via the standard approximation. 
First, recall the projection $P_N$ defined as in \eqref{def-PN}. Let $u_N(t)$ be a solution to the following regularized equations 
\begin{equation}\label{1st-pde}
\partial_t u_N = P_N A(P_N(u_N), \nabla_x P_N (u_N))
\end{equation}
with  initial data $u_N(0,x) = P_N u_0(x)$ for all $N$. As the right hand side consists of only a finite number of Fourier modes,
 this equation is in fact an ordinary differential equation. Hence there exists a unique solution $u_N(t)$, defined for $t$ small enough.
 It suffices to prove that $u_N(t)$ is a Cauchy sequence in $X_{\rho(t)}$, as long as $\rho(t)$ remains positive.   	
		
The Fourier coefficients of the solution to \eqref{1st-pde} satisfy
$$ 
\partial_t u_{N,\alpha} =P_{N,\alpha} A(P_N u_N, \nabla_x P_N u_N)
$$
where $P_{N,\alpha} f$ denotes the Fourier coefficient $f_\alpha$ if $|\alpha|\le N$, and zero, if otherwise. 
We therefore get
$$
\begin{aligned}
\sum_{| \alpha | \le N} \partial_t | u_{N,\alpha} | e^{z |\alpha|}
&\le \sum_{| \alpha | \le N} | \partial_t u_{N,\alpha} | e^{z |\alpha|}
\\&
\le  \sum_{| \alpha | \le N}  | P_{N,\alpha} A(P_N u_N, \nabla_x P_N u_N) | e^{z |\alpha|}.
\end{aligned}$$
Using \eqref{est-PN} and the assumption \eqref{A-condition}, we obtain the following Hopf-type differential inequality
\begin{equation}\label{Hopf}
\partial_t Gen[u_N]\le C_0 F(Gen[u_N])  \Big( 1 +  \partial_z Gen[u_N]\Big),
\end{equation}
in which $C_0$ is independent of $N$. For convenience, we set 
$$
G_N(t,z) = Gen[u_N(t)](z).
$$
The previous inequality yields 
\beq \label{diffineq}
\partial_t G_N \le C_0 F(G_N) (1+ \partial_z G_N),
\eeq
which is a differential inequality that we will now exploit in order to bound $u_N$ and all its derivatives.
Note that $G_N$ is a finite sum and is therefore defined for all $z$. As $N$ goes to $+ \infty$, $G_N(0,z)$
converges to $Gen[u_0](z)$, which is defined for $|z| \le \rho$.

As usual with analytic solutions, the domain of analyticity shrinks with time, hence we introduce 
$$
F_N(t,z) = G_N(t,\theta(t) z)
$$
for $t, z\ge 0$, where $\theta(\cdot)$ will be determined later. It follows that $F_N$ satisfies
\begin{equation}\label{ode-FN}
\partial_t F_N \le C_0 F(F_N) + C_0(F( F_N)  + \theta'(t) z) \partial_z F_N.
\end{equation}
Note that $F_N$ is defined for any $z$. In the limit $N \to + \infty$ we focus on $0 \le z \le \rho$.

We will choose $\theta(t)$ is such a way that the characteristics of (\ref{ode-FN}) are outgoing on $[0,\rho]$,
namely such that at $z =0$, $F(F_N) > 0$ (which is always satisfied) and such that at $z =\rho$,
  $F(F_N) + \theta'(t)  \rho < 0$. 

At $t= 0$, we choose $\theta(0) =1$ and thus 
$$
F_N(0,z)  = G_N(0,z) = Gen[P_N u_0](z)\le Gen[u_0](z),
$$
which is well-defined on $[0,\rho]$. 
Set
$$
M_0 = \sup_{0 \le z \le \rho} Gen[u_0](z).
$$
We will focus on times $0 \le t \le T_N$ such that $F_N(t,z) \le 2 M_0$ for $0 \le t \le T_N$ and $0 \le z \le \rho$.
We choose
$$
\theta(t) = 1 - F(3 M_0)  \rho^{-1} t .
$$
Observe that on $0 \le t \le T_N$, as long as $\theta(t) > 0$,  $F(F_N) + \theta'(t) \rho <0$. 
On such a time interval, (\ref{ode-FN})
is a nonlinear transport equation with a source term and with outgoing characteristics at $0$ and $\rho$.
As a consequence, we have 
$$
\partial_t \sup_{0 \le z \le 1} F_N(t,z)  \le C_0 \sup_{0 \le z \le 1} F(F_N(t,z)) \le C_0 F(2 M_0),
$$ 
hence
$$
F_N(t,z) \le M_0 + C_0 t F(2 M_0).
$$
Classical arguments then leads to the fact that $T_N$ is bounded away from $0$, namely there exists some
$T > 0$ such that $T_N \ge T$ for any $N$, and such that $\rho(t) > 0$ for any $t \le T$. 
This implies that 
$$ 
F_N(t,z) \le C(F_N(0,z)) \le C(M_0)
$$
for any $0 \le z \le \rho$ and any $0 \le t \le T$.
Thus $u_N(t)$ is uniformly bounded in $X_{\rho(t)}$ for all $N$.  
As a consequence, $u_N$ and all its derivatives of all orders are uniformly bounded in $L^\infty$. Up to the extraction
of a subsequence, $u_N$ and all its derivatives converge uniformly, towards some function $u$, solution of (\ref{1st-pde}).
Moreover, classical arguments show that $u(t) \in X_{\rho(t)}$ for any $0 \le t \le T$, which ends the proof of the theorem. 
\end{proof}

%%%%%%%%%%%%%%

\section{Euler equations}

%%%%%%%%%%%%%%

In this section, we apply the previous framework to construct analytic solutions
 to incompressible Euler equations on $\TT^d$ or $\RR^d$, $d\ge 2$. Namely, we consider 
\begin{equation}\label{NS1}
\begin{aligned}
 \partial_t u + u \cdot \nabla u + \nabla p &=0
 \\\nabla \cdot u &=0
\end{aligned} \end{equation} 
on $\TT^d$ (the case $\RR^d$ is treated similarly). 
Again, the existence result is classical (see, for instance, \cite{Bardos-B, Bardos-BZ}).
 The function space $X_\rho$ is defined in \eqref{def-Xrho}. We have

\begin{theorem}\label{theo-NS} 
Let $\rho>0$ and $u_0$ be a divergence-free vector field in $X_{\rho}$.
 Then the Cauchy problem \eqref{NS1} with initial data $u_0$ has a solution $u(t) $ in 
 $X_{\rho(t)}$ for positive times $t$ as long as $\rho(t) = \rho - C_1 t $ remains positive, $C_1$ being a constant depending
 on $u_0$. 
\end{theorem}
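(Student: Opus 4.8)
The plan is to remove the pressure and recast \eqref{NS1} as an equation of the form \eqref{1st-pde}, so that Theorem \ref{theo-1stpde} applies verbatim. Taking the divergence of the momentum equation and using $\nabla \cdot u = 0$ gives $\Delta p = - \nabla \cdot (u \cdot \nabla u)$, hence $\nabla p = - \nabla \Delta^{-1} \nabla \cdot (u \cdot \nabla u)$. Introducing the Leray projection $\PP$ onto divergence-free vector fields, the system \eqref{NS1} becomes
$$
\partial_t u = - \PP(u \cdot \nabla u) =: A(u, \nabla_x u).
$$
Any solution of this equation starting from a divergence-free $u_0$ stays divergence-free for all time, since $\PP$ maps into the space of divergence-free fields; thus solving this single evolution equation is equivalent to solving \eqref{NS1}.

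The crucial new ingredient, compared with the hyperbolic case, is that the generator function does not increase under the Leray projection:
$$
Gen[\PP f] \le Gen[f].
$$
Indeed, in Fourier variables $\PP$ acts mode by mode as the matrix multiplier $M(\alpha) = I - |\alpha|^{-2}\, \alpha \otimes \alpha$ for $\alpha \neq 0$ (and as the identity on the zero mode), which is for each $\alpha$ the orthogonal projection onto the hyperplane $\alpha^\perp$. Being an orthogonal projection, $M(\alpha)$ has operator norm at most one, so $|(\PP f)_\alpha| = |M(\alpha) f_\alpha| \le |f_\alpha|$, and multiplying by $e^{z|\alpha|}$ and summing yields the claim. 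This is exactly the analogue, for the nonlocal pressure term, of the truncation estimate \eqref{est-PN}.

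With this in hand the structural hypothesis \eqref{A-condition} follows at once. The nonlinearity $u \cdot \nabla u$ is of the form $G(u) \cdot \nabla_x u$ with the linear $G(u) = u$, so $\widetilde G = \mathrm{Id}$; using $Gen[\PP f] \le Gen[f]$, the product inequality \eqref{Gen-property2}, and $Gen[\nabla_x u] = \partial_z Gen[u]$ from \eqref{Gen-property3}, I obtain
$$
Gen[A(u,\nabla_x u)] = Gen[\PP(u\cdot\nabla u)] \le Gen[u\cdot\nabla u] \le Gen[u]\,\partial_z Gen[u] \le Gen[u]\big(1 + \partial_z Gen[u]\big).
$$
Hence \eqref{A-condition} holds with $F = \mathrm{Id}$ and $C_0 = 1$, and Theorem \ref{theo-1stpde} produces a solution $u(t) \in X_{\rho(t)}$ for $\rho(t) = \rho - C_1 t > 0$.

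For the construction itself I would run the same Galerkin scheme, solving $\partial_t u_N = - P_N \PP\big(P_N u_N \cdot \nabla_x P_N u_N\big)$ with $u_N(0) = P_N u_0$. Since $P_N$ and $\PP$ are commuting Fourier multipliers, each $u_N$ stays divergence-free, and the Hopf-type differential inequality \eqref{Hopf} is derived exactly as before, with the pressure contributing no loss thanks to $Gen[\PP f] \le Gen[f]$. The remainder of the argument — the choice of $\theta(t)$, the uniform bound in $X_{\rho(t)}$, and the compactness passage to the limit — is identical to the proof of Theorem \ref{theo-1stpde}. The one step deserving genuine care, and the main obstacle, is precisely the verification that the nonlocal pressure term does not enlarge the analyticity bound, i.e. the mode-wise orthogonal-projection estimate above; once that is secured, the incompressibility constraint is as harmless as the truncation $P_N$.
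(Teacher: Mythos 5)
Your proposal is correct and follows essentially the same route as the paper: project out the pressure with the Leray projector $\mathbb{P}$, observe that $\mathbb{P}$ acts mode by mode with uniformly bounded Fourier symbol so that $Gen[\mathbb{P}f] \le C\, Gen[f]$, verify \eqref{A-condition} with $F = \mathrm{Id}$ via \eqref{Gen-property2}--\eqref{Gen-property3}, and invoke Theorem \ref{theo-1stpde}. Your sharper observation that $\mathbb{P}_\alpha$ is an orthogonal projection of norm at most one is a minor refinement of the paper's uniform boundedness of $|\mathbb{P}_\alpha|$, but the argument is otherwise the same.
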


\begin{proof} 
Introduce the Leray projector $\mathbb{P} $,  projection onto the divergence-free $L^2$ vector space. 
In Fourier coefficients, $\mathbb{P}_\alpha$ is an $d\times d$ matrix with entries 
$$
(\mathbb{P}_\alpha)_{jk} = \delta_{jk} - \frac{\alpha_j\alpha_k}{|\alpha|^2}.
$$ 
In particular, $|\mathbb{P}_\alpha|$ is bounded, uniformly in $\alpha \in \ZZ^d$. 
Taking the Leray projection of \eqref{NS1}, we obtain 
\begin{equation}\label{NS-P} 
\partial_t u  = - \mathbb{P} (u\cdot \nabla u),
\end{equation}
which falls into the previous abstract framework. It remains to check the assumption \eqref{A-condition}. 
Indeed, using \eqref{Gen-property} and the uniform boundedness of $|\mathbb{P}_\alpha|$ in $\alpha$, we compute 
$$ 
Gen[\mathbb{P} (u\cdot \nabla u)] \le Gen[u\cdot \nabla u] \le Gen[u] Gen[\nabla u] \le Gen[u]\partial_z Gen[u].
$$
Theorem \ref{theo-NS} follows from Theorem \ref{theo-1stpde}, 
upon noting that the divergence-free condition is invariant under \eqref{NS-P}.  
\end{proof}

%%%%%%%%%%%

\section{Hydrostatic Euler equation and Vlasov models}

%%%%%%%%%%%

In this section, we apply the abstract framework to two more examples that arise as a singular limit of 
Euler equations and Vlasov-Poisson systems. The first is the hydrostatic Euler equations which read 
\begin{equation}\label{HE1}
\partial_t  \omega +  u\cdot \nabla \omega  = 0,
\end{equation}
on $\TT \times [-1,1]$ for vorticity function 
\begin{equation} \label{HE1bis}
 \omega = \partial_y^2 \varphi 
 \end{equation}
  and velocity field $u = \nabla^\perp \varphi$, with $\varphi$ being the stream function, satisfying the boundary condition 
\begin{equation}\label{HE2}
\varphi_{\vert_{y=\pm 1}} = 0.
\end{equation}
The model \eqref{HE1}-\eqref{HE2} can be derived from the two-dimensional incompressible Euler equations 
in a narrow channel \cite{Lions,Gr99-2,Br03}. 
The Cauchy problem for data with Sobolev regularity is illposed \cite{HKN2}, except for convex profiles \cite{Br99}, 
due to the loss of derivatives in $x$ through $u_2 = -\partial_x \varphi = -\partial_x \partial_y^{-2}\omega$. 
However, the local solution can be constructed for data with analytic regularity \cite{KTVZ}. 

We now show that the previous abstract framework also applies to the hydrostatic Euler model \eqref{HE1}-\eqref{HE2}. 
For $z\ge 0$, we introduce the notion of generator functions as follows: 
\begin{equation}\label{vort-norm0}
Gen[\omega](z): =\sum_{\alpha\in \mathbb{Z}} \sum_{\beta\ge 0} e^{z|\alpha| }  
\|\partial_y^\beta \omega_\alpha\|_{L^\infty[-1,1]} \frac{z^{\beta}}{ \beta!} ,\end{equation}
in which $\omega_\alpha(y)$ denotes the Fourier transform of $\omega(x,y)$ with respect to $x$.
 The analyticity radius for $x$ and $y$ needs not to be the same \cite{GrN6}, 
 however for simplicity, we have taken the generator of the form \eqref{vort-norm0}. We obtain the following. 

 \begin{theorem}\label{theo-HE} 
 Let $\rho>0$ and $\omega_0$ be in $X_{\rho}$. 
 Then, the Cauchy problem \eqref{HE1}-\eqref{HE2} with initial data $\omega_0$ 
 has a solution $\omega(t) $ in $X_{\rho(t)}$ for positive times $t$ as long as 
 $\rho(t) = \rho - 3 C_1 t$ remains positive, $C_1$ being a constant depending on $\omega_0$. 
\end{theorem}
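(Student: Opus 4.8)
The plan is to mirror the proof of Theorem \ref{theo-1stpde}: set up a Galerkin-type approximation, recover the velocity from the vorticity, derive a Hopf-type differential inequality for the generator function, and close it by the method of outgoing characteristics. First I would regularize by projecting onto the first $N$ horizontal Fourier modes, letting $\omega_N$ solve $\partial_t\omega_N + P_N(u_N\cdot\nabla\omega_N)=0$ with $\omega_N(0)=P_N\omega_0$, where for each mode $\alpha$ the stream function $\varphi_{N,\alpha}(t,y)$ is the unique solution of the two-point problem $\partial_y^2\varphi_{N,\alpha}=\omega_{N,\alpha}$, $\varphi_{N,\alpha}(\pm1)=0$, and $u_N=\nabla^\perp\varphi_N$. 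This reconstruction keeps $u_N$ divergence free and enforces \eqref{HE2}; as in Theorem \ref{theo-1stpde}, it then suffices to bound $Gen[\omega_N]$ uniformly in $N$ on $X_{\rho(t)}$ and to pass to the limit.

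The heart of the matter is the generator estimate for the velocity. Exactly as in Lemma \ref{lem-generators}, the definition \eqref{vort-norm0} yields the identity $\partial_z Gen[f]=Gen[\partial_x f]+Gen[\partial_y f]$, the $x$-part coming from differentiating $e^{z|\alpha|}$ and the $y$-part from the Taylor weights $z^\beta/\beta!$. Writing $u_1=\partial_y\varphi$ and $u_2=-\partial_x\varphi=-\partial_x\partial_y^{-2}\omega$, I would use the Green's function of the boundary value problem to bound the low-order $y$-derivatives, $\|\varphi_\alpha\|_{L^\infty}+\|\partial_y\varphi_\alpha\|_{L^\infty}\le C\|\omega_\alpha\|_{L^\infty}$, together with $\partial_y^\beta\varphi_\alpha=\partial_y^{\beta-2}\omega_\alpha$ for $\beta\ge2$. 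Summing the Taylor series, each $y$-integration contributes only a bounded factor on $[0,\rho]$, so that $Gen[u_1]\le C\,Gen[\omega]$, whereas the double integration in $u_2$ produces a smoothing factor $(1+z^2)$ but the horizontal derivative costs one $\partial_z$, giving $Gen[u_2]\le C(1+z^2)\,Gen[\partial_x\omega]\le C\,\partial_z Gen[\omega]$ on $[0,\rho]$. Combining these with the product rule \eqref{Gen-property2} and with $Gen[\partial_x\omega],Gen[\partial_y\omega]\le\partial_z Gen[\omega]$ produces
\[
Gen[u\cdot\nabla\omega]\le Gen[u_1]\,Gen[\partial_x\omega]+Gen[u_2]\,Gen[\partial_y\omega]\le C_0\,Gen[\omega]\,\partial_z Gen[\omega]+C_0\,(\partial_z Gen[\omega])^2.
\]

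Writing $G_N=Gen[\omega_N]$, property \eqref{Gen-property4} then yields the quasilinear Hopf inequality $\partial_t G_N\le C_0(G_N+\partial_z G_N)\,\partial_z G_N$. This is precisely where the hydrostatic problem departs from the abstract hypothesis \eqref{A-condition}: the term $(\partial_z G_N)^2$ is genuinely of second order in the derivative loss and cannot be absorbed into $C_0F(G_N)(1+\partial_z G_N)$, so I would re-run the characteristic argument of Theorem \ref{theo-1stpde} by hand. Setting $F_N(t,z)=G_N(t,\theta(t)z)$, the inequality becomes a transport inequality whose characteristic speed is $C_0\theta^{-1}F_N+C_0\theta^{-2}\partial_z F_N+\theta'\theta^{-1}z$; choosing $\theta(t)=1-3C_1\rho^{-1}t$ with $C_1$ depending on $M_0=\sup_{0\le z\le\rho}Gen[\omega_0](z)$ makes this speed negative at $z=\rho$ and positive at $z=0$, so the characteristics are outgoing on $[0,\rho]$. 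The factor $3$ simply records that the effective speed at $z=\rho$ must now dominate the quadratic contribution $C_0\theta^{-2}\partial_z F_N$ in addition to the linear one and the transport correction $\theta'z$. As in Theorem \ref{theo-1stpde}, this gives a uniform bound $F_N(t,z)\le C(M_0)$ on a time interval $0\le t\le T$ on which $\rho(t)=\rho-3C_1t>0$, hence $\omega_N$ is uniformly bounded in $X_{\rho(t)}$.

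The main obstacle is exactly the velocity estimate for $u_2=-\partial_x\partial_y^{-2}\omega$ and the quasilinear inequality it forces. The loss of one horizontal derivative in $u_2$ is the source of the Sobolev ill-posedness recalled after \eqref{HE2}, and in the analytic setting it manifests as the $(\partial_z G)^2$ term; the delicate point is to extract from the nonlocal operator $\partial_y^{-2}$ with the Dirichlet condition \eqref{HE2} enough $y$-smoothing (the factor $1+z^2$ above) that only a single net $\partial_z$ survives in $Gen[u_2]$, and then to control $\partial_z F_N$ along the characteristics, for instance by differentiating the Hopf inequality once in $z$ and running the same outgoing-characteristic comparison for $\partial_z F_N$. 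Once the uniform bounds are in hand, the argument concludes as before: all $x$- and $y$-derivatives of $\omega_N$ are uniformly bounded in $L^\infty$, a subsequence converges, and the limit $\omega\in X_{\rho(t)}$ solves \eqref{HE1}--\eqref{HE2} for $0\le t\le T$, which proves Theorem \ref{theo-HE}.
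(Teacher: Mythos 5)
Your derivation of the velocity bound and of the quasilinear Hopf inequality $\partial_t G_N \le C_0\big(G_N + \partial_z G_N\big)\partial_z G_N$ coincides with the paper's estimate \eqref{est-HE1}, and you correctly identify that the $(\partial_z G_N)^2$ term falls outside hypothesis \eqref{A-condition}. But your closure has a genuine gap. Re-running the outgoing-characteristics argument ``by hand'' does not work as stated: the characteristic speed now contains $\partial_z F_N$ itself, which the sup-norm comparison of Theorem \ref{theo-1stpde} gives no control over. You cannot choose $C_1$ depending only on $M_0=\sup_{0\le z\le\rho}Gen[\omega_0]$ so that the speed at $z=\rho$ is negative, because $\partial_z F_N(t,\rho)$ is exactly the quantity the maximum principle fails to bound (and for general data in $X_\rho$ it need not even be finite at $z=\rho$). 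Your proposed remedy, ``differentiating the Hopf inequality once in $z$'', is not a legitimate operation: from the pointwise inequality $\partial_t G_N \le \Phi$ one cannot deduce $\partial_t\partial_z G_N \le \partial_z\Phi$ --- inequalities between functions do not pass to derivatives --- and even granting it, the resulting inequality for $\partial_z G_N$ would carry $\partial_z^2 G_N$ in its transport coefficient, starting a hierarchy that never closes at the level of the scalar inequality.

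The paper closes this step differently, and the fix is short: differentiate the \emph{equation}, not the inequality. Setting $U := [\omega, \nabla\omega]$ and $\mathcal{A}(U,\nabla U) := [A(\omega,\nabla\omega), \nabla A(\omega,\nabla\omega)]$, one has $\partial_t U = \mathcal{A}(U,\nabla U)$, and since $Gen[U]$ dominates both $Gen[\omega]$ and $Gen[\nabla\omega]$ (the latter comparable to $\partial_z Gen[\omega]$ for the generator \eqref{vort-norm0}), one of the two lost derivatives in \eqref{est-HE1} is absorbed into the unknown itself: expanding $\nabla A = \nabla u\cdot\nabla\omega + u\cdot\nabla\nabla\omega$ and using the elliptic bounds on $\varphi$ yields $Gen[\mathcal{A}(U,\nabla U)] \le C_0 F(Gen[U])\big(1+\partial_z Gen[U]\big)$ with $F$ quadratic, so Theorem \ref{theo-1stpde} applies verbatim to the augmented system; this is also the natural reading of the factor $3$ in $\rho-3C_1t$, which tracks the constants for the augmented unknown rather than your dominating-the-quadratic-speed heuristic. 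Your instinct that one must evolve $\partial_z G_N$ alongside $G_N$ is the right one, but it has to be implemented by augmenting the unknown at the PDE level --- precisely the paper's vectorization. As written, your proof does not establish the uniform bound on $F_N$, and with it the theorem.
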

\begin{proof} 
Let us first check the properties of the new generator function. 
For any $f,g$, we compute 
$$
\partial_y^\beta (f g)_\alpha =
\sum_{\alpha'\in \ZZ} \sum_{0 \le \beta' \le \beta} 
{\beta ! \over \beta' ! (\beta - \beta') !}  \partial_y^{\beta'} f_{\alpha'} 
\partial_y^{\beta - \beta'} g_{\alpha-\alpha'},
$$
for any $\alpha \in \ZZ$ and $\beta\ge 0$. Therefore, for $z\ge 0$, we have 
$$
\begin{aligned}
&Gen(fg)(z) 
\\&= \sum_{\alpha \in \ZZ}\sum_{\beta \ge 0} e^{z|\alpha|}\| \partial_y^\beta (fg)_\alpha \|_{L^\infty} {z^\beta \over \beta ! } 
\\
&\le \sum_{\alpha \in \ZZ}\sum_{\beta \ge 0} \sum_{\alpha'\in \ZZ}\sum_{0 \le \beta' \le \beta} 
e^{z|\alpha|} \|\partial_y^{\beta'} f_{\alpha'}\|_{L^\infty}
\| \partial_y^{\beta - \beta'} g_{\alpha-\alpha'} \|_{L^\infty}  
{z^\beta \over \beta' ! (\beta - \beta') !}  
\\
&\le \sum_{\alpha,\alpha'\in \ZZ}\sum_{ \beta \ge 0} \sum_{\beta \ge \beta' } e^{z|\alpha'|} 
e^{z|\alpha-\alpha'|} \|\partial_y^{\beta'} f_{\alpha'}\|_{L^\infty}
\| \partial_y^{\beta - \beta'} g_{\alpha - \alpha'} \|_{L^\infty} 
{z^{\beta'}z^{\beta - \beta'} \over \beta' ! (\beta - \beta') !}  
\\
&\le Gen[f](z) Gen[g](z),
\end{aligned}$$
which proves the second identity in Lemma \ref{lem-generators}. The other identities are straightforward. 

Let us now verify assumption \eqref{A-condition}. In this case 
$$
A(\omega,\nabla \omega) = - u \cdot \nabla \omega,
$$ 
where $u = \nabla^\perp \varphi$, with $\varphi$ defined through \eqref{HE1bis}-\eqref{HE2}. 
We first note that
$$ 
Gen[u \cdot \nabla \omega] \le Gen[u]\partial_z Gen[\omega].
$$
Next, $\partial_y^2 \varphi = \omega$ (with the Dirichlet boundary conditions on $[-1,1]$) can be
explicitly solved, leading to
$$ 
\|\varphi\|_{L^\infty} + \| \partial_y \varphi \|_{L^\infty} + \| \partial_y^2 \varphi\|_{L^\infty} \le C_0 \|\omega\|_{L^\infty}.
$$ 
This yields to
$$
 Gen[u] \le C_0 \Big[ Gen[\omega] + \partial_z Gen[\omega] \Big],
 $$
recalling the loss of derivatives in variable $x$ through $u_2 = -\partial_x \varphi$. As a result, we have obtained 
\begin{equation}\label{est-HE1}
Gen[A(\omega, \nabla \omega)] \le C_0 \Big[ Gen[\omega] + \partial_z Gen[\omega] \Big] \partial_z Gen[\omega] .
\end{equation}
To fit into the previous framework, we introduce the vector functions 
$$ 
U := [\omega, \nabla \omega] , \qquad \mathcal{A}(U,\nabla U) := [A(\omega, \nabla \omega), \nabla A (\omega, \nabla \omega)].
$$
It follows that $\partial_t U = \mathcal{A}(U,\nabla U)$.  
The theorem follows from the previous abstract framework.  
\end{proof}

We conclude our paper with an application to kinetic models, known as the {kinetic incompressible Euler} 
and Vlasov-Dirac-Benney systems. These models are derived from 
the quasineutral limit or the vanishing Debye length limit of Vlasov-Poisson systems 
\cite{Br89, Gr99, HK11, HKFr, Bardos, HKN2}. The Vlasov equation reads
  \begin{equation}
\label{VP1}
\partial_t f + v\cdot \nabla_x f  - \nabla_x \varphi \cdot \nabla_v f = 0, 
\end{equation}
on $\TT^d\times \RR^d$, $d\ge 1$, where the potential function $\varphi$ is defined by
  \begin{equation}
\label{VP2}
 \varphi =  \int_{\mathbb{R}^d} f(t,x,v) \, dv  -1
\end{equation}
for the Vlasov-Dirac-Benney system \cite{Bardos,HK11}, or alternatively, defined through the elliptic equation 
 \begin{equation}
\label{VP3}
-\Delta \varphi =  \nabla \cdot \left(\nabla \cdot \int f v \otimes v \, dv  \right)
\end{equation}
for the kinetic incompressible Euler model \cite{Br89}. Again, the system \eqref{VP1}-\eqref{VP3} experiences 
a loss of derivatives in $x$. As a consequence, 
the Cauchy problem for data with Sobolev regularity is illposed \cite{HKN2}, except for stable data \cite{BB,HKFr}. 
The local analytic solutions were constructed in \cite{HK11,JN,BFJJ}. 
 
We now show that the abstract framework also applies to the kinetic model \eqref{VP1}-\eqref{VP3}. 
Indeed, we introduce the following modified notion of generator functions: 
\begin{equation}\label{vort-norm1}
Gen[f](z): =\sum_{\alpha\in \mathbb{Z}^d} \sum_{\beta\ge 0} e^{z|\alpha| }  \| 
\langle v\rangle^m \partial_v^\beta f_\alpha\|_{L^\infty(\RR^d)} \frac{z^{\beta }}{ \beta!} ,
\end{equation}
for some $m>d+2$, in which $f_\alpha(v)$ denotes the Fourier transform of $f(x,v)$ with respect to $x$, and
$\langle v \rangle = (1 + \| v \|^2)^{1/2}$. 
With exception of the weight in $v$, the generator function is identical to \eqref{vort-norm0}. 
Thus, Lemma \ref{lem-generators} follows similarly. As a result, we have the following 

 \begin{theorem}\label{theo-kinetic} 
Let $\rho>0$ and $f_0$ be in $X_{\rho}$.
Then, the Cauchy problem \eqref{VP1}-\eqref{VP3} with initial data $f_0$ has a solution $f(t) $ in $X_{\rho(t)}$ 
for positive times $t$ as long as $\rho(t) = \rho - 3 C_1 t$ remains positive, where $C_1$ is a constant depending on $f_0$. 
\end{theorem}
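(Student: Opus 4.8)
The plan is to reduce Theorem~\ref{theo-kinetic} to the abstract framework of Theorem~\ref{theo-1stpde}, exactly as was done for the hydrostatic Euler equation in Theorem~\ref{theo-HE}. The Vlasov equation \eqref{VP1} has the form $\partial_t f = A(f,\nabla_{x}f)$ with
\[
A(f,\nabla_x f) = - v\cdot \nabla_x f + \nabla_x\varphi\cdot\nabla_v f,
\]
so the whole task is to verify the structural assumption \eqref{A-condition} for the generator function \eqref{vort-norm1} with its $v$-weight $\langle v\rangle^m$, $m>d+2$. As noted in the excerpt, Lemma~\ref{lem-generators} (in particular the product inequality \eqref{Gen-property2} and the differentiation identity \eqref{Gen-property3}) carries over verbatim to the weighted generator, since $\langle v\rangle^m$ multiplies both factors consistently and the $\partial_v$-Leibniz combinatorics are identical to the $\partial_y$-computation already performed. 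I would begin by recording this, and then estimate each of the two terms of $A$ separately.

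The transport term is the easy one. I would write $Gen[v\cdot\nabla_x f]\le Gen[v\cdot (\cdot)]\,\partial_z Gen[f]$ using that $\nabla_x$ corresponds to $\partial_z$; the multiplication by the unbounded coefficient $v$ is absorbed by the weight, because $\langle v\rangle^m |v|\le \langle v\rangle^{m+1}$ costs only one extra power, and this is precisely the role of choosing $m>d+2$ with room to spare—one designs the weight so that multiplication by $v_j$ maps the space boundedly into a comparable space, giving $Gen[v\cdot\nabla_x f]\le C_0\,\partial_z Gen[f]$. The genuinely delicate term is the force term $\nabla_x\varphi\cdot\nabla_v f$. Here I would first bound $\nabla_x\varphi$ in terms of $Gen[f]$. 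For the Vlasov--Dirac--Benney closure \eqref{VP2} one has $\varphi+1=\int f\,dv$, so $\varphi_\alpha=\int f_\alpha\,dv$ (for $\alpha\neq0$) and the $v$-integral converges because $m>d$ makes $\langle v\rangle^{-m}$ integrable on $\RR^d$; thus $\|\varphi_\alpha\|_{L^\infty}\lesssim \|\langle v\rangle^m f_\alpha\|_{L^\infty}$, and after multiplying by $e^{z|\alpha|}$ the extra factor $|\alpha|$ from $\nabla_x$ turns into a $\partial_z$, yielding $Gen[\nabla_x\varphi]\le C_0\,\partial_z Gen[f]$. For the kinetic incompressible Euler closure \eqref{VP3}, the double divergence and the two powers of $v$ in $v\otimes v$ are controlled identically, now using $m>d+2$ so that $\int \langle v\rangle^{-m}|v|^2\,dv<\infty$; the Fourier symbol of $-\Delta^{-1}\nabla\nabla\cdot(\nabla\cdot\,)$ is of order one in $\alpha$ (it is $\alpha_j\alpha_k\alpha_\ell/|\alpha|^2$ up to signs), which again contributes exactly one derivative, i.e. one $\partial_z$. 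Combining with the product rule and the fact that $\nabla_v$ does \emph{not} cost a $z$-derivative (it acts only on the $v$-variable, and the weighted generator controls all $\partial_v^\beta$ derivatives simultaneously through its defining sum), I would arrive at
\[
Gen[\nabla_x\varphi\cdot\nabla_v f]\le Gen[\nabla_x\varphi]\,Gen[\nabla_v f]\le C_0\,\partial_z Gen[f]\cdot Gen[f],
\]
so that altogether $Gen[A(f,\nabla_x f)]\le C_0\big(1+Gen[f]\big)\big(1+\partial_z Gen[f]\big)$, which is of the form \eqref{A-condition} with $F(s)=C_0(1+s)$.

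The main obstacle is exactly the bookkeeping of the $\nabla_v$ derivative in the force term: one must check that differentiating in $v$ is genuinely harmless for the generator \eqref{vort-norm1}—that it neither produces a $\partial_z$ (as $\nabla_x$ does) nor destroys the $\langle v\rangle^m$ weight. This requires verifying that $\|\langle v\rangle^m \partial_v^\beta(\partial_{v_j}f)_\alpha\|_{L^\infty}$ is already part of the defining sum for $Gen[f]$ (it corresponds to the $\beta+e_j$ term), so that $Gen[\nabla_v f]\le Gen[f]$ up to the combinatorial shift in $\beta$, much as $\partial_y$ was handled in Theorem~\ref{theo-HE}. Once these weighted estimates are in place, I would introduce the augmented variable $U:=[f,\nabla f]$ and $\mathcal{A}(U,\nabla U):=[A,\nabla A]$ exactly as in the hydrostatic case to close the system at the level of first derivatives, and then invoke Theorem~\ref{theo-1stpde} to conclude existence on the time interval where $\rho(t)=\rho-3C_1t>0$, the factor $3$ coming from the passage to the augmented system and the extra loss of derivative in $x$ inherent to the potential closure.
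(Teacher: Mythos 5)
Your overall architecture --- verify \eqref{A-condition} for the weighted generator, augment the unknown, invoke Theorem \ref{theo-1stpde} --- is the paper's route, but two of your key estimates are false, and one of them inverts the logic of the proof. The central error is your claim that $\nabla_v$ ``does not cost a $z$-derivative,'' i.e.\ that $Gen[\nabla_v f]\le Gen[f]$. In the generator \eqref{vort-norm1}, the norm $\|\langle v\rangle^m\partial_v^{\beta+1}f_\alpha\|_{L^\infty}$ enters $Gen[\nabla_v f]$ with weight $z^\beta/\beta!$ but enters $Gen[f]$ with weight $z^{\beta+1}/(\beta+1)!$; the mismatch factor $(\beta+1)/z$ is unbounded in $\beta$, so no such inequality can hold (take $f$ independent of $x$ with $\|\langle v\rangle^m\partial_v^\beta f\|_{L^\infty}=\beta!$: then $Gen[f]=(1-z)^{-1}$ while $Gen[\nabla_v f]=(1-z)^{-2}$). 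The correct bookkeeping is the opposite: reindexing $\beta\mapsto\beta-1$ in the sum shows $Gen[\nabla_v f]\le\partial_z Gen[f]$, so a $v$-derivative is priced exactly like an $x$-derivative (just as $\partial_y$ is priced by $\partial_z$ in \eqref{vort-norm0}). Consequently the force term satisfies the paper's estimate
\begin{equation*}
Gen[\nabla_x\varphi\cdot\nabla_v f]\le C_0\,\partial_z Gen[f]\cdot \partial_z Gen[f],
\end{equation*}
which is \emph{quadratic} in $\partial_z Gen[f]$ and does not fit \eqref{A-condition}; your conclusion $Gen[A]\le C_0(1+Gen[f])(1+\partial_z Gen[f])$ with $F(s)=C_0(1+s)$ is therefore wrong. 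This is precisely why the augmentation $U=[f,\nabla_x f,\nabla_v f]$ is not an optional ``closing at the level of first derivatives'' but the essential step: it absorbs one factor $\partial_z Gen[f]$ into $Gen[U]$, and only for the augmented system does \eqref{A-condition} hold. You do perform the augmentation at the end, but since in your accounting \eqref{A-condition} was already satisfied for $f$ alone, your write-up never actually needs it --- the error and the patch cancel by accident rather than by argument.

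Second, your treatment of the transport term $v\cdot\nabla_x f$ fails as stated: with the fixed weight, $\|\langle v\rangle^m v_j\, g\|_{L^\infty}\le\|\langle v\rangle^{m+1}g\|_{L^\infty}$, and the right-hand side is controlled by no term of $Gen[f]$. Choosing $m>d+2$ ``with room to spare'' buys nothing, since $m$ is fixed once and for all; its only role is integrability, namely $\int\langle v\rangle^{-m}dv<\infty$ for \eqref{VP2} and $\int\langle v\rangle^{-m}|v|^2dv<\infty$ for \eqref{VP3}, in the bound $|\varphi_\alpha|\le C_0\|\langle v\rangle^m f_\alpha\|_{L^\infty}$. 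So $Gen[v\cdot\nabla_x f]\le C_0\,\partial_z Gen[f]$ is unjustified. To be fair, the paper is silent here too: its proof simply takes $A(f,\nabla_v f)=\nabla_x\varphi[f]\cdot\nabla_v f$, dropping the free transport; handling that term honestly requires a genuinely different device (for instance a time-dependent weight exponent or passing to free-transport coordinates $x-vt$), not the weight margin you invoke.
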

\begin{proof}
It remains to verify assumption \eqref{A-condition} for 
$$
A(f,\nabla_v f) = \nabla_x \varphi[f] \cdot \nabla_v f
$$ 
for $\varphi[f]$ defined by the relation \eqref{VP2} or \eqref{VP3}. In the first case, for $\alpha \not =0$, we have 
$$ 
|\varphi_\alpha | \le \int_{\mathbb{R}^d} |f_\alpha(t,v)| \, dv \le C_0 \|\langle v\rangle^m f_\alpha \|_{L^\infty}
$$
for some $m>d$. The second case is similar, with $m>d+2$. This proves that 
$$ 
Gen[\nabla_x \varphi] \le C_0 \partial_z Gen[f] 
$$ 
and hence 
$$ 
Gen[A(f,\nabla_v f)] \le C_0  \partial_z Gen[f]  \partial_z Gen[f] .
$$
Again, as in the previous case, to fit into the abstract framework, we introduce the vector functions 
$$ 
U := [f, \nabla_x f, \nabla_v f] 
$$
$$
\mathcal{A}(U,\nabla U) := [A(f, \nabla_x f), \nabla_xA(f, \nabla_x f), \nabla_v A(f, \nabla_x f)],
$$
which yields $\partial_t U = \mathcal{A}(U,\nabla U)$. 
The theorem follows from Theorem \eqref{theo-1stpde}.
\end{proof}

%\begin{remark}
%Local analytic solutions to Euler and Navier-Stokes equations in domains with boundaries as well as to the classical Prandtl 
%boundary layer equations can also be constructed in a similar fashion, upon modifying generator functions 
%to include the behavior of 
%boundary layers. We refer to \cite{GrN6} for details. 
%\end{remark}

%%%%%%%%%%%%%%%%%%%%%%%%%%%%%%%%%%%%

\bibliographystyle{abbrv}
%\bibliography{book-ref}

\end{document}